\newcommand{\R}{\mathbb{R}}
\newcommand{\C}{\mathbb{C}}
\newcommand{\Z}{\mathbb{Z}}
\newcommand{\N}{\mathbb{N}}
\newcommand{\supp}{\mathop{\mathrm{supp}}}
\newcommand{\conv}[2]{#1\ast #2}
\newcommand{\DTFT}{\mathfrak{F}}
\newcommand{\Span}{\mathop{\mathrm{span}}}
\newcommand{\set}[2]{\left\{#1:#2\right\}}
\newtheorem{theorem}{Theorem}[section]
\newtheorem{remark}{Remark}[section]
\numberwithin{equation}{section}
\begin{document}

\title{Reproducing fractional monomials: weakening of the Strang--Fix conditions}

\author{Victor G.~Zakharov\\[0.5ex]
victor@icmm.ru}

\maketitle{}

\paragraph{Abstract}
A method to reproduce causal and symmetric monomials of fractional degree
by integer shifts of the corresponding fractional B-splines, introduced by M.~Unser and Th.~Blue, is presented.
Thus the traditional relation between the degree of reproduced monomials and
the order of approximation holds.
Bivariate, obtained by tensor product, fractional B-splines are introduced; and reproducing of bivariate
causal and symmetric monomials is shown.
Demonstration that the method is based on a weakening of the Strang--Fix conditions is presented.

\noindent{\it Keywords:} Fractional B-splines; causal and symmetric monomials; monomials with fractional exponents;
Strang--Fix conditions \\[1ex]
\noindent{\it 2020 MSC:} 41A15, 41A30, 41A63, 65D07, 26A33

\medskip



\section{Introduction}

Recall that the famous Schoenberg cardinal polynomial B-splines, see~\cite{SchoenbergA}, are formed by
peace-wise algebraic polynomials joining with the maximal continuity; and the splines
can be defined as follows
\begin{equation}\label{ClassicalBSplinesDefinition}
  B^n(x):=\int_\R B^{n-1}(x-y)B^0(y)\,dy,\quad\text{where}\quad B^0(x):=\begin{cases}1,&x\in[0,1),\\ 0,&\text{otherwise.}
                                                                                                    \end{cases}
\end{equation}

\begin{remark}
In the paper, in contrast to the traditional manner, we prefer to denote B-splines such that the order of a spline
coincides with the degree.
By the by, M.~Unser and Th.~Blue~\cite{UB1,UB2} used similar method to define the fractional B-splines.
\end{remark}

This is well known that the Fourier transform of any monomial $x^n$, $x\in\R$, $n\in\Z_{\ge0}$, is the $n$-th
derivative of the Dirac delta-distribution; and this simple relation between a monomial and
its Fourier transform cannot be extended to a fractional degree.
Nevertheless, the fashion of the Fourier transform of causal
\begin{equation}\label{SymmetricMonomial}
  x_+^\alpha:=\begin{cases}
       x^\alpha, & x\ge0,\\
       0,&\text{otherwise}
    \end{cases}\quad\text{and symmetric}\quad
  x^\alpha_*
  := \begin{cases}
     |x|^\alpha, & \alpha\not\in2\Z_{\ge0},\\
     x^\alpha \log|x|, & \alpha\in2\Z_{\ge0}
   \end{cases}
\end{equation}
monomials is the same for any {\em fractional} degree $\alpha\in\R$, $a>-1$,~\cite{GelfandShilov}.
Degree-independence of the Fourier transform form of the causal and symmetric monomials allowed
M.~Unser and Th.~Blue to extend the B-splines to non-integral orders $\alpha\in R$, $\alpha>-1$. Such
non-integral B-splines we call ``fractional''.

M.~Unser and Th.~Blue~\cite{UB1,UB2} demonstrated that the fractional (causal and symmetric) B-splines
of order $\alpha$ satisfy the Strang-Fix conditions~\cite{FixStr,StrFix} of integral order $\lceil\alpha\rceil$
and consequently reproduce algebraic polynomials up to integral degree $\lceil\alpha\rceil$.
However, Unser and Blue~\cite{UB2} proved that
a fractional B-spline of order $\alpha$ has the fractional order of approximation $\alpha+1$.

In the paper, we state a method to reproduce fractional monomials $x_+^\alpha$, $x_*^\alpha$ by integer
shifts of the corresponding fractional B-splines.
Thus we have more traditional situation, namely, for a B-spline of non-integral order $\alpha$, we have that
the degree of the reproduced monomial is fractional value $\alpha$ and the order of approximation of $B^\alpha$
is also fractional value $\alpha+1$.
Moreover, we can note that, for the causal B-splines and causal monomials,
like the classical Strang--Fix theory, we have {\em exact reproducing by a finite number}
(for any bounded interval of the ray $[0,\infty)$) of the shifted B-splines.
On the other hand, since the fractional B-splines are not compactly supported,
the reproducing of the ordinary monomials $x^n$, $n=0,1,\dots,\lceil\alpha\rceil$, by the fractional B-splines
is not exact and converges non-uniformly (only point-wise).
Note also, in spite of the infinite support of symmetric B-splines, the symmetric (fractional)
monomials are reproduced
by the symmetric B-splines {\em uniformly}.

Using tensor product, bivariate causal and symmetric (fractional, in general) B-splines are presented.
And reproducing of the corresponding (causal and symmetric) monomials is demonstrated.
Similarly to the one-dimensional case, the bivariate causal monomials are reproduced {\em exactly}
(for any bounded set of the first quadrant of the plane $\R^2$) by a
finite number of the bivariate causal B-splines.

To reproduce fractional monomials, we introduce a generalization (in fact, an weakening)
of the Strang--Fix conditions.

The paper is organized as follows. In Sec.~\ref{Section_NotationsDefinitions},
we present known, and introduced by Unser and Blue~\cite{UB1,UB2}
notations, definitions, and formulas.
In particular, the binomial series and the Fourier transforms for the causal and symmetric cases are demonstrated.
Sec.~\ref{Section_CausalSymmetricB-splines} is devoted to fractional causal and symmetric B-splines and is based on the papers~\cite{UB1,UB2}
of M.~Unser, Th.~Blue. In Subsec.~\ref{OrdinarySplines},
we concern shortly that the classical (integral) B-splines
can be considered as causal and symmetric splines. In Sec.~\ref{Section_ReproducingMonomials},
we present the method to reproduce
fractional, generally, causal and symmetric monomials by integer shifts of the corresponding B-splines.
In Sec.~\ref{Section_Strang--FixConditions}, we consider reproducing of causal and symmetric monomials
from the Strang--Fix conditions point of view.
In particular, in Subsec.~\ref{Subsec_WeakeningStrang--FixConditions},
a weakening of the Strang--Fix conditions is shown.

\section{Notations and definitions}\label{Section_NotationsDefinitions}

\subsection{General notations}

Here we introduce some well-known notations and definitions.

Let $\delta_{n0}:=\begin{cases}
     1,&n=0,\\
     0,&n\neq 0
   \end{cases}$ be the {\em Kronecker delta}  and $\delta$ be the {\em Dirac delta-distribution}.
Let $S'$ denote a space of {\em tempered distributions}.

\begin{remark}
In the paper, we shall denote numerical sequences by bold symbols and elements of the sequences
by the corresponding plain symbols:  $\pmb p:=\left(p_n\right)_{n\in\Z}$.
\end{remark}

The discrete-domain Fourier transform (DDFT) $\DTFT:\Z\to [0,2\pi)$
is defined as follows
\begin{equation*}
  \DTFT\left[\pmb p\right](\omega)
  :=\sum_{n\in\Z}p_n e^{-i n\omega}=:\hat{\pmb p}(\omega),\quad \omega\in[0,2\pi).
\end{equation*}
The inverse DDFT $\DTFT^{-1}: [0,2\pi)\to \Z$ is
defined as
\begin{equation}\label{InverseDDFT}
  p_n=\int_{[0,2\pi)}\hat{\pmb p}(\omega) e^{in\omega}\,d\omega,\qquad n\in\Z.
\end{equation}

\begin{remark}\label{RemarkOnIDDFT}
 Usually, if we have a ($2\pi$-periodic) function $\hat{\pmb p}(\omega)$
 as a polynomial (in general, the polynomial
 can have terms with negative degree) or series (the Laurent series, generally) in $e^{-i\omega}$,
 then we can take the coefficients of the polynomial (series) $\hat{\pmb p}$ as elements of the corresponding sequence
 $\pmb p$ without evaluation of the integral in formula~\eqref{InverseDDFT}.
\end{remark}

The {\em discrete} convolution $\conv{\pmb p}{\pmb q}$ of two sequences
$\pmb p:=\left(p_n\right)_{n\in\Z}$, $\pmb q:=\left(q_n\right)_{n\in\Z}$
is defined as usually
$$
  \left(\conv{\pmb p}{\pmb q}\right)_n:=\sum_{k\in \Z}p_{n-k}q_{k},\qquad n\in \Z.
$$
Recall an important  property of the DDFT:
\begin{equation}\label{DTFTForDiscreteConvolution}
  \DTFT\left[\conv{\pmb p}{\pmb q}\right](\omega)=\hat{\pmb p}(\omega)\hat{\pmb q}(\omega).
\end{equation}

By $\pmb p\cdot \pmb q$ denote the inner product:
$\pmb p\cdot \pmb q:=\sum_{k\in\Z}p_k q_k$.

The continuous Fourier transform (CFT) $F:\R\to\widehat{\R}$ of a function
$f\in L^1(\R)$ is defined as
$$
  F[f](\omega):=\int_\R f(x)e^{-i\omega x}\,dx=:\hat f(\omega).
$$

\begin{remark}\label{AnalyticallContinuation}
Note that the Fourier transform (CFT or DDFT) can be extended to compactly
supported functions (distributions) from the space
$S'(\R)$ (for DDFT, the space is $S'\bigl([0,2\pi)\bigr)$).
Note also that the Fourier transform of compactly supported, in particular, functions
can be continued analytically to the whole complex plane $\C$.
\end{remark}

The (continuum) convolution of two functions $f,g$ is defined as
$$
  \left(f\ast g\right)(x):=\int_R f(y)g(x-y)\,dy.
$$
For the CFT, an analog of formula~\eqref{DTFTForDiscreteConvolution}
is valid also
$$
  F\left[f\ast g\right](\omega)=\hat f(\omega)\hat g(\omega).
$$

\subsection{Known relations}

In this subsection, we recall some well-known and introduced by Unser and Blue~\cite{UB1,UB2} relations.

Using the Taylor series for a function $f(z):=(1+z)^\alpha$, $z\in\C$, we have a binomial series
for any (fractional, irrational, complex) exponent $\alpha$
\begin{equation}\label{NewtonBinomialSeries}
  (1+z)^\alpha=\sum_{k\ge0}\dbinom{\alpha}{k}z^k.
\end{equation}

Recall that the binomial coefficients, in the most general case, are defined as
$$
  \dbinom{m}{n}:=\dfrac{\Gamma(m+1)}{\Gamma(n+1)\Gamma(m-n+1)}.
$$

By the relation
\begin{equation*}
  \dbinom{-\alpha-1}{k}=(-1)^k\dbinom{k+\alpha}{k},
\end{equation*}
the binomial series for a negative exponent is of the form
\begin{equation}\label{NewtonBinomialSeriesNegativePower}
  (1+z)^{-\alpha-1}=\sum_{k\ge0}(-1)^k\dbinom{k+\alpha}{k}z^k.
\end{equation}

The binomial series for the symmetric case is, see~\cite{UB2},
\begin{equation}\label{NewtonBinomialSeriesSymmetric}
  |1+z|^\alpha=\sum_{k\in\Z}\dbinom{\alpha}{k+\frac a2}z^k,
\end{equation}
where the binomial coefficients
\begin{equation}\label{BinomialCoefficientForSymmetricCase}
    \dbinom{\alpha}{k+\frac a2}
\end{equation}
are even with respect to $k$.

\begin{remark}\label{RemarkOnEvenWRTKBinomialCoefficiemts}
In the case of {\em an even negative} $\alpha$,
binomial coefficients~\eqref{BinomialCoefficientForSymmetricCase}
vanish if $k\le0$, i.\,e., the binomial coefficients
are not even with respect to $k$, and we have to redefine the coefficients as
$$
  \dbinom{\alpha}{|k|+\frac a2},\qquad \alpha\in2\Z,\ \alpha<0,\ k\in\Z.
$$

On the other hand, for {\em an odd negative} $\alpha$ and any $k$, binomial
coefficients~\eqref{BinomialCoefficientForSymmetricCase} become infinite
\begin{equation}\label{BinomialCoefficientBecomeInfinity}
  \dbinom{\alpha}{k+\frac a2}=\pm\infty,\qquad \alpha\in2\Z+1,\ \alpha<0,\ k\in\Z.
\end{equation}
\end{remark}

\begin{remark}
Note, in the fractional case of $\alpha$, the binomial
series in~\eqref{NewtonBinomialSeries},~\eqref{NewtonBinomialSeriesNegativePower},
and~\eqref{NewtonBinomialSeriesSymmetric} (including any odd and excluding even $\alpha$) are infinite; and
note also that, for $|z|<1$ or for $\Re(\alpha)>0$, $|z|=1$,
the series converge absolutely (series~\eqref{NewtonBinomialSeriesSymmetric} converges only on
the unit circle, excluding $z=-1$).
\end{remark}

The CFT of the causal monomials $x_{+}^\alpha$ is of the form~\cite{GelfandShilov}
\begin{equation}\label{FourierTransformCasualMonomial}
  F\left[x_{+}^\alpha\right](\omega) =
    \begin{cases}
      \left(i\omega\right)^{-\alpha-1}, & \alpha\not\in\Z,\ \alpha>-1,\\
      \left(i\omega\right)^{-\alpha-1}+C\delta^{(\alpha)}, &
          \alpha\in\Z_{\ge0};
    \end{cases}
 \end{equation}
 where $C$ is a constant factor,
 and CFT of the symmetric monomials $x^\alpha_*$
 is~\cite{GelfandShilov}
 \begin{equation}\label{CFTSymmetricMonomials}
  F\Bigl[x^\alpha_*\Bigr](\omega) =
      \left|\omega\right|^{-\alpha-1},\qquad \alpha\in\R,\ \alpha>-1.
\end{equation}

\begin{remark}\label{RemarkAccuracyCFTMonomials}
In the paper, formulas like~\eqref{FourierTransformCasualMonomial},
\eqref{CFTSymmetricMonomials} are accurate within some constant factors.
\end{remark}

By the CFT, we shall use the simplest definition of the fractional derivative $D^\alpha$:
\begin{equation}\label{FractionalDerivative}
  D^\alpha f(x) := F^{-1}\left[(i\omega)^\alpha\hat f(\omega)\right](x).
\end{equation}
In the symmetric case, the (fractional, in general) derivative $D_*^\alpha$ is defined as follows
\begin{equation}\label{FractionalDerivativeSymmetric}
  D^\alpha_* f(x) := F^{-1}\left[|\omega|^\alpha\hat f(\omega)\right](x).
\end{equation}

\section{Causal and symmetric B-splines}\label{Section_CausalSymmetricB-splines}

In the next subsections, we consider the causal and symmetric (fractional and integer) cardinal B-splines.
And by a symbol $B^\alpha$ we shall denote B-splines of (fractional, in general) order $\alpha$ independently
of their support and symmetry.

\begin{remark}
In the paper, similarly to the Fourier transform of the causal and symmetric monomials,
see Remark~\ref{RemarkAccuracyCFTMonomials},
we define causal and symmetric B-splines within some constant factors,
i.\,e., we use no prefactors to define the splines.
\end{remark}

\subsection{Causal splines}

The causal B-splines $B_+^\alpha$, $\alpha\in\R$, $\alpha>-1$,  are defined as follows
\begin{equation}\label{FractionalB-Sline}
  B^\alpha_+(x):=\Delta^{\alpha+1}_{+}x_{+}^\alpha=\sum_{k\ge0}(-1)^k\dbinom{\alpha+1}{k}(x-k)^\alpha_+,
\end{equation}
where $\Delta^\alpha_{+}$ is a {\em forward} finite difference:
\begin{equation}\label{ForwardFinite-Difference}
  \Delta^\alpha_{+}f(x):=\sum_{k\ge0}(-1)^k\dbinom{\alpha}{k}f(x-k).
\end{equation}

Recall that the classical B-splines satisfy two-scale relations. In other words, any B-spline is a scaling
function in the corresponding multiresolution analysis (MRA). For causal B-splines, the situation is the same.
Let the sequence $\pmb a:=\left(a_k\right)_{k\in\Z_{\ge0}}$ define the mask as
\begin{equation}
    \hat{\pmb{a}}(\omega):=\DTFT[\pmb{a}](\omega)= \sum_{k=0}^\infty a_k e^{-ik\omega}.
  \label{SequencesAB}
\end{equation}
Suppose mask~\eqref{SequencesAB} has a zero of (fractional) multiplicity $\alpha+1$ at the point $\pi$.
Suppose also $\hat{\pmb{a}}(0)=1$, then the mask $\hat{\pmb{a}}$ is of the form
\begin{equation}\label{MaskA}
  \hat{\pmb{a}}(\omega)= \left(\frac{1+e^{-i\omega}}{2}\right)^{\alpha+1};
\end{equation}
and, using binomial expansion~\eqref{NewtonBinomialSeries}, we have
\begin{equation*}
  a_k := \begin{cases} \dfrac{1}{2^{\alpha+1}}\dbinom{\alpha+1}{k}, & k\ge0;\\
      0, & k<0. \end{cases}
\end{equation*}

By mask~\eqref{MaskA}
and similarly to
the known formula
for $\cos(\omega)$: $\prod_{j=1}^\infty\cos(2^{-j}\omega)=\dfrac{\sin\omega}{\omega}$, see, for example,
the book~\cite{Dau},
we can easily determine
$\hat B^\alpha_+$. Namely, we have
\begin{equation}\label{FTFractionalB-Sline}
  \hat B_+^\alpha(\omega) = \left(\frac{1-e^{-i\omega}}{i\omega}\right)^{\alpha+1}.
\end{equation}

Note that the series in~\eqref{FractionalB-Sline} is a binomial expansion
of the numerator in the right-hand side of~\eqref{FTFractionalB-Sline}; consequently
all fractional B-splines~\eqref{FractionalB-Sline} cannot be compactly supported (in fact,
$\supp B^\alpha_+=[0,\infty)$).

Using forward finite difference~\eqref{ForwardFinite-Difference},
formula~\eqref{FTFractionalB-Sline}, and
fractional derivative definition~\eqref{FractionalDerivative}, we can obtain the following relation
\begin{equation}\label{DB=DeltaB}
  D^\beta B^\alpha_+=\Delta_+^\beta B_+^{\alpha-\beta},
\end{equation}
which is valid for any $\alpha,\beta\in\R$, $\alpha>-1$, $\beta<\alpha+1$.

\subsection{Symmetric splines}

The symmetric (fractional) B-splines $B^\alpha_*$, $\alpha\in\R$, $\alpha>-1$,
are defined as
\begin{equation}\label{FractionalSymmetricB-Sline}
  B^\alpha_*(x)
    :=
         \Delta^{\alpha+1}_*x^\alpha_*=\sum\limits_{k\in\Z}(-1)^k\dbinom{\alpha+1}{k+\frac{\alpha+1}{2}}(x-k)^\alpha_*,
\end{equation}
where the symmetric monomial $x_*$ is defined by~\eqref{SymmetricMonomial} and
$\Delta^{\alpha+1}_*$ is a {\em symmetric} finite difference
\begin{equation}\label{SFDO}
  \Delta^\alpha_*f(x):=\sum_{k\in\Z}(-1)^k\dbinom{\alpha}{k+\frac{\alpha}{2}}f(x-k).
\end{equation}

Similarly to~\eqref{FTFractionalB-Sline}, the Fourier transform of the symmetric B-spline
can be determined as
\begin{equation}\label{FTSymmetricB-Sline}
  \hat B_*^\alpha(\omega)
    = \left|\frac{1-e^{-i\omega}}{\omega}\right|^{\alpha+1}.
\end{equation}

Naturally, any symmetric B-spline satisfies a two-scale relation and the DDFT of the corresponding sequence
$\pmb{a}_*:=\left(a_{*,k}\right)_{k\in\Z}$, which must be even: $\hat{\pmb a}_*(-\omega)=\hat{\pmb a}_*(\omega)$,
is of the form
\begin{equation}\label{SymmetricSequenceA}
  \hat{\pmb{a}}_*(\omega)= \left|\frac{1+e^{-i\omega}}{2}\right|^{\alpha+1},\quad\text{where}\quad
      a_{*,k} := \dfrac{1}{2^{\alpha+1}}\dbinom{\alpha+1}{k+\frac{\alpha+1}2},\quad k\in\Z.
\end{equation}

Note obvious fact that the support of any symmetric spline $B_*^\alpha$, excluding any
odd order $\alpha$, is $\R$.

Now, present an analog of formula~\eqref{DB=DeltaB}, see~\cite{UB1},
\begin{equation*}
  D^\beta_* B^\alpha_*=\Delta_*^\beta B_*^{\alpha-\beta},
\end{equation*}
where $\Delta_*^\beta$ is finite difference~\eqref{SFDO}
and the derivative $D^\beta_*$ is defined by~\eqref{FractionalDerivativeSymmetric}.

\medskip
Finally present a convolution relation, which is valid for (classical, causal, symmetric)
B-splines of any order,
$$
  B^{\alpha_1}\ast B^{\alpha_2}=B^{\alpha_1+\alpha_2+1}.
$$

\subsection{Classical B-splines}\label{OrdinarySplines}

Classical cardinal B-splines, i.\,e., the splines defined by~\eqref{ClassicalBSplinesDefinition},
can be considered as causal or symmetric. In fact, a classical B-spline $B^n$ of some order $n\in\Z_{\ge0}$
is causal if it is allocated so that $\supp B^n=[0,n+1)$.
A classical B-spline of some {\em odd} (positive) order is symmetric if
$\supp B^n=\bigl[-\lceil n/2\rceil,\lceil n/2\rceil\bigr]$, $n\in2\Z_{\ge0}+1$.

\section{Reproducing monomials}\label{Section_ReproducingMonomials}

\subsection{Ordinary polynomials}

A fractional spline $B^\alpha$ satisfies the Strang--Fix conditions of order $\lceil\alpha\rceil$.
Namely,
$$
  \hat B^\alpha(0)=1,\quad \left(\hat B^\alpha\right)^{(m)}(2\pi k),\quad m=0,\dots, \lceil\alpha\rceil, \
     k\in\Z\setminus\{0\}.
$$

Note that, unlike the traditional Strang--Fix theory, when compactly supported basis functions are considered
and the question about convergence of reproducing does not arise; in the case of fractional splines,
we have infinite series and we must trouble ourselves the convergence problem.

In paper~\cite{UB2},
there is a plot, see Figure~4.1 there,
where, in accordance with the Strang--Fix conditions, a linear polynomial is reproduced by integer shifts
of the spline $B_+^{\frac12}$:
\begin{equation}\label{MonomialReproducing}
  \sum_{k\in\Z}\left(k+\frac34\right)B_+^{\frac12}(x-k)=x,\quad x\in\R.
\end{equation}

Nevertheless, this reproducing demonstrates a problem
with the series in the left-hand side of~\eqref{MonomialReproducing}.
Namely, considering
equality~\eqref{MonomialReproducing} on the ray $[0,\infty)$,
applying derivative $D^{\frac12}$ to the both sides of the equality, and
using~\eqref{DB=DeltaB}, we get
\begin{equation}\label{ParadoxicalSituation}
  x\in[0,\infty):\quad\sum_{k\in\Z}\left(k+\frac34\right)\Delta_+^\frac12 B^0_+(x-k)=\sqrt{x},
\end{equation}
where $B^0_+=B^0$, in fact,
is the indicator function of the interval $[0,1)$.
Since the sum of the series in the left-hand side of~\eqref{ParadoxicalSituation}
is a step function, it follows that this equality cannot take place.
Thus we have to suppose that the series in the left-hand side of~\eqref{MonomialReproducing}
does not converge uniformly (only the pointwise convergence is valid); consequently,
the series cannot be differentiated (the Fourier transform cannot be applied) component-wise.
Moreover, we can demonstrate that the partition of the unity by
$B_+^{\frac12}$ also converges non-uniformly. Namely, similarly to the previous case,
we have an impossible situation
\begin{equation*}
  x\in[0,\infty):\quad\sum_{k\in\Z}\Delta_+^\frac12 B^0(x-k)=\frac1{\sqrt{x}}.
\end{equation*}

Here we do not investigate explicitly convergence of the considered series.

\subsection{Causal and symmetric monomials}

As has been demonstrated, fractional causal and symmetric B-splines are formed by linear combinations of
shifted monomials, see~\eqref{FractionalB-Sline},~\eqref{FractionalSymmetricB-Sline}.
So we can expect to obtain the causal and symmetric fractional monomials
by the corresponding causal and symmetric fractional B-splines.

\subsubsection{Causal monomials}

Let $\hat{\pmb a}(\omega):=\DTFT[\pmb a](\omega)$, $\pmb{a}:=\left(a_k\right)_{k\in\Z_{\ge0}}$,
be the mask that defines the scaling function (B-spline $B_+^\alpha$) and
be of the form~\eqref{MaskA}.
Let a detailed mask $\hat{\pmb{b}}(\omega):=\DTFT[\pmb b](\omega)$, $\pmb{b}:=\left(b_k\right)_{k\in\Z_{\ge0}}$,
which defines a wavelet,
be determined as follows
\begin{equation}\label{MaskB}
  \hat{\pmb b}(\omega):=\hat{\pmb{a}}(\omega+\pi) = \left(\frac{1-e^{-i\omega}}{2}\right)^{\alpha+1}.
\end{equation}
And, consequently, the elements of $\pmb{b}$ are
\begin{equation*}
  b_k:=(-1)^k a_k=\begin{cases}(-1)^k\dfrac{1}{2^{\alpha+1}}\dbinom{\alpha+1}{k}, & k\ge0;\\
               0, & k<0.
     \end{cases}
\end{equation*}

\begin{theorem}\label{Theorem_CasualFractionalMonomialReproducing}
Let $B^\alpha_+$ be a causal spline of order $\alpha$, where
$\alpha\in\R$, $\alpha>-1$,
is an arbitrary number.
Suppose a sequence $p$ is
\begin{equation}\label{SequenceP}
  \pmb{p}=\left(p_k\right)_{k\in\Z_{\ge0}},\qquad p_k:=
     \begin{cases}
         \dbinom{k+\alpha}{k}, & k\in\Z_{\ge0},\\
         0, &  k<0;
     \end{cases}
\end{equation}
then we have
\begin{equation}\label{CausalMonomialsRestoring}
  \sum_{k=0}^\infty p_k B_+^\alpha(x-k)=\sum_{k=0}^\infty\dbinom{k+\alpha}{k}B_+^\alpha(x-k)=x_{+}^\alpha.
\end{equation}
\end{theorem}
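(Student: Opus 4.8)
The natural strategy is to work on the Fourier side, where both the B-spline and the monomial have clean expressions. Recall from~\eqref{FTFractionalB-Sline} that $\hat B_+^\alpha(\omega) = \bigl((1-e^{-i\omega})/(i\omega)\bigr)^{\alpha+1}$, and from~\eqref{FourierTransformCasualMonomial} that $F[x_+^\alpha](\omega) = (i\omega)^{-\alpha-1}$ (up to the constant factors we are ignoring, and up to the delta-term which only appears for integer $\alpha$; since the whole construction is stated ``within constant factors'' I will not fuss over it). The sum on the left of~\eqref{CausalMonomialsRestoring} is a semi-discrete convolution $\sum_k p_k B_+^\alpha(\cdot-k)$, whose Fourier transform is formally $\hat{\pmb p}(\omega)\,\hat B_+^\alpha(\omega)$. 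So the first step is to compute $\hat{\pmb p}(\omega) = \sum_{k\ge0}\binom{k+\alpha}{k}e^{-ik\omega}$. By the negative-exponent binomial series~\eqref{NewtonBinomialSeriesNegativePower} with $z=-e^{-i\omega}$, this is exactly $(1-e^{-i\omega})^{-\alpha-1}$. Multiplying, $\hat{\pmb p}(\omega)\hat B_+^\alpha(\omega) = (1-e^{-i\omega})^{-\alpha-1}\cdot(1-e^{-i\omega})^{\alpha+1}(i\omega)^{-\alpha-1} = (i\omega)^{-\alpha-1} = F[x_+^\alpha](\omega)$, which is the desired identity. This is the heart of the computation and it is essentially a one-line cancellation once the pieces are in place.

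The second step — and this is where the real work lies — is to justify that this formal Fourier-side manipulation is legitimate, i.e.\ that the series $\sum_k p_k B_+^\alpha(x-k)$ actually converges (pointwise, and to the claimed function). The excerpt has just flagged, via~\eqref{MonomialReproducing}–\eqref{ParadoxicalSituation}, that naive term-by-term Fourier/derivative arguments for monomial reproduction by fractional splines can fail because the series converges only pointwise, not uniformly. So I cannot simply invoke $F[\sum p_k B_+^\alpha(\cdot-k)] = \hat{\pmb p}\hat B_+^\alpha$ without care. The plan is instead to argue directly in the spatial domain: substitute the definition $B_+^\alpha(x) = \sum_{j\ge0}(-1)^j\binom{\alpha+1}{j}(x-j)_+^\alpha$ from~\eqref{FractionalB-Sline} into the left side of~\eqref{CausalMonomialsRestoring}, interchange the two sums, and collect the coefficient of each shifted monomial $(x-m)_+^\alpha$. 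That coefficient is $\sum_{j+k=m} p_k (-1)^j\binom{\alpha+1}{j} = \sum_j (-1)^j\binom{\alpha+1}{j}\binom{m-j+\alpha}{m-j}$, which is the $m$-th coefficient of the product of the two power series $(1-z)^{-\alpha-1}$ and $(1-z)^{\alpha+1}$ — hence equals $\delta_{m0}$. Therefore the left side collapses to $p_0 (x)_+^\alpha \cdot(\text{coeff})= x_+^\alpha$, with all higher terms annihilated. For $x$ in any bounded set only finitely many $(x-k)_+^\alpha$ with $k\ge0$ are... actually all $k\le x$ contribute, so for fixed $x$ the outer sum is over $0\le k\le x$ which is finite — this is precisely the ``exact reproduction by a finite number of shifts on any bounded interval of $[0,\infty)$'' remarked in the introduction, and it makes the interchange of summation trivially valid since everything is a finite sum once $x$ is fixed.

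The main obstacle I anticipate is handling the edge cases where the binomial-coefficient identities degenerate: when $\alpha$ is a nonnegative integer the series in~\eqref{FractionalB-Sline} terminates (the spline is the classical compactly supported one) and $x_+^\alpha$ has the extra $\delta^{(\alpha)}$ term in its Fourier transform, and Remark~\ref{RemarkAccuracyCFTMonomials} / the ``within constant factors'' convention must be invoked to dispose of it; the spatial-domain finite-sum argument above sidesteps this cleanly, which is another reason to prefer it. A secondary point to check is the convergence of $\hat{\pmb p}(\omega)=(1-e^{-i\omega})^{-\alpha-1}$ at $\omega=0$, where it blows up — but this singularity is exactly cancelled by the zero of $\hat B_+^\alpha$, and since I am not routing the proof through the Fourier transform of the full series this is not needed; it serves only as the conceptual check that $\hat{\pmb p}\hat B_+^\alpha$ is the bounded, well-defined object $(i\omega)^{-\alpha-1}$. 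So the clean writeup is: (i) interchange sums, (ii) identify the coefficient of $(x-m)_+^\alpha$ as the Cauchy product $[(1-z)^{-\alpha-1}(1-z)^{\alpha+1}]_m=\delta_{m0}$ via~\eqref{NewtonBinomialSeries} and~\eqref{NewtonBinomialSeriesNegativePower}, (iii) conclude, with a remark on finiteness of the sum for fixed $x$ and on the integer-$\alpha$ convention, and optionally (iv) the Fourier-side one-liner as a sanity check.
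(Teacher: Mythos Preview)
Your proposal is correct. Your optional step~(iv) --- the Fourier-side one-liner $\hat{\pmb p}(\omega)\hat B_+^\alpha(\omega)=(i\omega)^{-\alpha-1}$ --- is in fact the paper's entire proof: the paper applies the Fourier transform to both sides of~\eqref{CausalMonomialsRestoring}, computes $\hat{\pmb p}(\omega)=(1-e^{-i\omega})^{-\alpha-1}$ via~\eqref{NewtonBinomialSeriesNegativePower}, and verifies the cancellation, splitting into the cases $\alpha\notin\Z$ and $\alpha\in\Z$ (the latter so as to carry along the $C\delta^{(\alpha)}$ term from~\eqref{FourierTransformCasualMonomial}); it does not pause over the convergence question you raise. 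Your preferred route --- substitute~\eqref{FractionalB-Sline}, observe that for fixed $x\ge0$ the resulting double sum is finite since $(x-k-j)_+^\alpha$ vanishes for $k+j>x$, and identify the coefficient of $(x-m)_+^\alpha$ as the $m$-th Cauchy-product coefficient of $(1-z)^{\alpha+1}(1-z)^{-\alpha-1}=1$, namely $\delta_{m0}$ --- is genuinely different. It stays entirely in the spatial domain, needs no case split for integer $\alpha$, and turns the introduction's remark about ``exact reproducing by a finite number of shifted B-splines on any bounded interval of $[0,\infty)$'' into an actual step of the argument rather than a side observation. The paper's approach is shorter and makes transparent the link to the weakened Strang--Fix condition $\hat{\pmb b}\hat{\pmb p}=1$ of~\eqref{WeakenedStrangFixCondition}; yours is more self-contained and sidesteps the distributional subtleties you correctly flagged.
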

\begin{proof}
Consider two cases.
\begin{description}
\item[$\alpha\not\in\Z$.]
Using binomial series~\eqref{NewtonBinomialSeriesNegativePower},
the DDFT of sequence~\eqref{SequenceP} can be written as follows
\begin{equation}\label{ZTransformP}
  \hat{\pmb{p}}(\omega) :=
     \sum_{k=0}^\infty \dbinom{k+\alpha}{k}e^{-ik\omega}
      = \left(\frac{1}{1-e^{-i\omega}}\right)^{\alpha+1}.
\end{equation}
Applying the Fourier transform to the both sides of~\eqref{CausalMonomialsRestoring},
we obtain
$$
  \sum_{k=0}^\infty p_ke^{-ik\omega}\hat B_+^\alpha(\omega)=(i\omega)^{-\alpha-1}.
$$
Using~\eqref{FTFractionalB-Sline} and~\eqref{ZTransformP}, we get
\begin{equation*}
  \frac1{(1-e^{-i\omega})^{\alpha+1}}\left(\frac{1-e^{-i\omega}}{i\omega}\right)^{\alpha+1}
                 = \left(\frac{1}{i\omega}\right)^{\alpha+1}.
\end{equation*}
Thus equality~\eqref{CausalMonomialsRestoring}
is valid.
\item[$\alpha\in\Z$.]
Since the expression $(1-e^{-i\omega})^{\alpha+1}$ has a zero of
multiplicity $\alpha+1$ at the point $0$; therefore, we can rewrite the Fourier
transform of the spline $B_+^\alpha$, see~\eqref{FTFractionalB-Sline}, as follows
\begin{align*}
  \hat B_+^\alpha(\omega)=\left(\dfrac{1-e^{-i\omega}}{i\omega}\right)^{\alpha+1}
         &+ C\delta^{(\alpha)}(1-e^{-i\omega})^{\alpha+1}\\&=(1-e^{-i\omega})^{\alpha+1}
                \left((i\omega)^{-\alpha-1}+ C\delta^{(\alpha)}\right),
\end{align*}
where $C$ is a constant factor.
Using~\eqref{FourierTransformCasualMonomial} (the second line)
and~\eqref{ZTransformP}, we obtain~\eqref{CausalMonomialsRestoring}.
\end{description}
This completes the proof.
\end{proof}

In Fig.~\ref{FigureCausalMonomialSqrtxRestoring}, we present reproducing of
the causal monomials $x_+^\alpha$, $\alpha=\frac15,\frac34,1,\frac54$, by
a linear combination of integer shifts of
the causal B-splines $B_+^{\alpha}$,
see~\eqref{CausalMonomialsRestoring}.

\begin{figure}[t]
  \centerline{\includegraphics[width=0.8\textwidth]{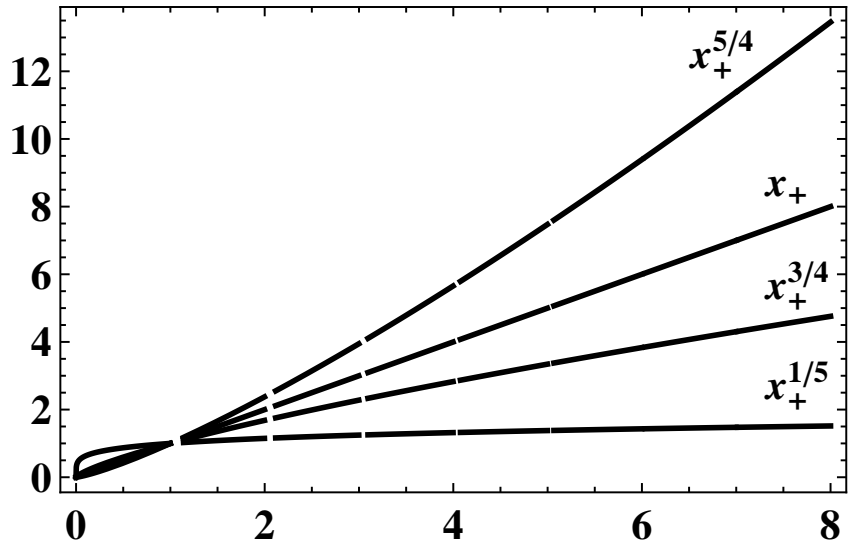}}
   \caption{Reproducing of the causal monomials $x_+^\alpha$, $x\in[0,\infty)$, $\alpha=\frac15,\frac34,1,\frac54$,
   by the causal B-splines $B_+^{\alpha}$.}
   \label{FigureCausalMonomialSqrtxRestoring}
\end{figure}

\begin{remark}
Sometimes, this is more convenient to rewrite formula~\eqref{CausalMonomialsRestoring}
as follows
\begin{equation}\label{InverseFiniteDifferenceCausalCase}
  x_{+}^\alpha=\Delta^{-\alpha-1}_+ B^\alpha_+,
\end{equation}
where the causal finite difference is defined by~\eqref{ForwardFinite-Difference}.
\end{remark}

\subsubsection{Symmetric monomials}

Let $\hat{\pmb a}_*(\omega):=\DTFT[\pmb a_*](\omega)$, $\pmb a_*:=\left(a_{*,k}\right)_{k\in\Z}$, be a mask.
Let the mask $\hat{\pmb a}_*$ be of the form~\eqref{SymmetricSequenceA}, i.\,e., the mask
has a zero of multiplicity $\alpha+1$ at the point $\pi$ and $\hat{\pmb a}_*(0)=1$.
Consequently, the mask can be used to define the B-spline $B_*^\alpha$.
Let a detailed mask $\hat{\pmb{b}}$
be determined as follows
\begin{equation}\label{MaskBSymmetric}
  \hat{\pmb b}(\omega):=\hat{\pmb{a_*}}(\omega+\pi) = \left|\frac{1-e^{-i\omega}}{2}\right|^{\alpha+1}.
\end{equation}
And, consequently, the terms of $\pmb{b}$ are
\begin{equation*}
  b_k=(-1)^k a_k=(-1)^k\dfrac{1}{2^{\alpha+1}}\dbinom{\alpha+1}{k+\frac{\alpha+1}{2}},\quad k\in\Z.
\end{equation*}

\begin{theorem}\label{Theorem_SymmetricFractionalMonomialReproducing}
Let $B^\alpha_*$ be a {\em symmetric} spline of order $\alpha$, where
$\alpha\in\R$, $\alpha>-1$,
is an arbitrary number.
Suppose a sequence $\pmb p$ is
\begin{equation}\label{SequencePSymmetricCase}
  \pmb p=\left(p_k\right)_{k\in\Z},\qquad p_k:=
     (-1)^k\dbinom{-\alpha-1}{k-\frac{\alpha+1}{2}},\quad k\in\Z,
\end{equation}
then
\begin{equation}\label{FractionalPolynomialsReproducingByFractionalSplinesSymmetricCase}
  \sum_{k\in\Z}p_k B_*^\alpha(x-k)
              =\sum_{k\in\Z}(-1)^k\dbinom{-\alpha-1}{k-\frac{\alpha+1}{2}}B_*^\alpha(x-k)=x^\alpha_*,
\end{equation}
where $x^\alpha_*$ is defined by~\eqref{SymmetricMonomial}.
\end{theorem}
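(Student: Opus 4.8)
The plan is to mirror the proof of Theorem~\ref{Theorem_CasualFractionalMonomialReproducing}, working on the Fourier side. First I would apply the CFT to both sides of~\eqref{FractionalPolynomialsReproducingByFractionalSplinesSymmetricCase}; by~\eqref{DTFTForDiscreteConvolution} (in its continuum/mixed form) the left-hand side becomes $\hat{\pmb p}(\omega)\,\hat B_*^\alpha(\omega)$, while the right-hand side, by~\eqref{CFTSymmetricMonomials}, equals $|\omega|^{-\alpha-1}$ (up to the constant factors we are ignoring, see Remark~\ref{RemarkAccuracyCFTMonomials}). Since by~\eqref{FTSymmetricB-Sline} we have $\hat B_*^\alpha(\omega)=\bigl|(1-e^{-i\omega})/\omega\bigr|^{\alpha+1}$, the identity to be proved reduces to the purely sequence-level statement
\begin{equation*}
  \hat{\pmb p}(\omega)=\left|\frac{1}{1-e^{-i\omega}}\right|^{\alpha+1}.
\end{equation*}

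The core computation is therefore to verify that the sequence $\pmb p$ defined by~\eqref{SequencePSymmetricCase} has exactly this DDFT. Here I would use the symmetric binomial series: from~\eqref{NewtonBinomialSeriesNegativePower} written for the symmetric case (the symmetric analogue of~\eqref{NewtonBinomialSeriesSymmetric} with exponent $-\alpha-1$), one gets $|1+z|^{-\alpha-1}=\sum_{k\in\Z}\binom{-\alpha-1}{k+\frac{-\alpha-1}{2}}z^k$ on the unit circle. Substituting $z=-e^{-i\omega}$ turns $|1+z|^{-\alpha-1}$ into $|1-e^{-i\omega}|^{-\alpha-1}$ and produces the factor $(-1)^k$ in each coefficient, so that
\begin{equation*}
  \left|1-e^{-i\omega}\right|^{-\alpha-1}=\sum_{k\in\Z}(-1)^k\binom{-\alpha-1}{k-\frac{\alpha+1}{2}}e^{-ik\omega}=\hat{\pmb p}(\omega),
\end{equation*}
which is precisely the sequence in~\eqref{SequencePSymmetricCase}. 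By Remark~\ref{RemarkOnIDDFT} the coefficients of this Laurent-type series are exactly the $p_k$, so the reduced identity holds and hence so does~\eqref{FractionalPolynomialsReproducingByFractionalSplinesSymmetricCase}.

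As in Theorem~\ref{Theorem_CasualFractionalMonomialReproducing}, the exceptional values of $\alpha$ must be treated separately: when $\alpha\in2\Z_{\ge0}$ the symmetric monomial $x^\alpha_*$ involves the logarithmic factor and its CFT still is $|\omega|^{-\alpha-1}$, while $\bigl(|1-e^{-i\omega}|\bigr)^{\alpha+1}$ now has a genuine zero of multiplicity $\alpha+1$ at $\omega=0$, so one should factor $\hat B_*^\alpha$ accordingly (in the spirit of the $\alpha\in\Z$ case of the causal theorem) and check the coefficients remain well defined via the redefinition $\binom{\alpha}{|k|+\frac a2}$ noted in Remark~\ref{RemarkOnEvenWRTKBinomialCoefficiemts}. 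The odd-negative-$\alpha$ pathology~\eqref{BinomialCoefficientBecomeInfinity} does not intrude since here $\alpha>-1$. I expect the main obstacle to be exactly this bookkeeping around the even-integer case together with justifying the termwise application of the CFT to the (non-compactly-supported, only conditionally convergent) series on the left — the same subtlety flagged for~\eqref{MonomialReproducing}; the paper's remark that symmetric reproduction converges uniformly suggests this is safe, but it is the one point needing care rather than a routine substitution.
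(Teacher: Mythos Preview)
Your proposal is correct and follows exactly the route the paper indicates: the paper's own proof simply says the argument is ``similar to the proof of Theorem~\ref{Theorem_CasualFractionalMonomialReproducing} and based on formulas~\eqref{NewtonBinomialSeriesSymmetric},~\eqref{FTSymmetricB-Sline},'' i.e., compute $\hat{\pmb p}(\omega)=|1-e^{-i\omega}|^{-\alpha-1}$ from the symmetric binomial series and cancel against $\hat B_*^\alpha(\omega)$, which is precisely the computation you outline. Your remarks about the even-integer exceptional case and about convergence go slightly beyond what the paper supplies but are consistent with it (indeed the paper later notes that~\eqref{FractionalPolynomialsReproducingByFractionalSplinesSymmetricCase} is inapplicable for $\alpha\in2\Z_{\ge0}$ and treats that case via the factorization~\eqref{FiniteDifferenceFactorization}); the only slip is your final sentence, since the ``odd negative'' pathology of~\eqref{BinomialCoefficientBecomeInfinity} refers to the exponent $-\alpha-1$, not to $\alpha$, and so \emph{does} correspond exactly to the $\alpha\in2\Z_{\ge0}$ case you already flagged.
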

The proof of this theorem is similar to the proof of Theorem~\ref{Theorem_CasualFractionalMonomialReproducing}
and based on formulas~\eqref{NewtonBinomialSeriesSymmetric},~\eqref{FTSymmetricB-Sline}.

In Fig.~\ref{FigureSymmetricMonomialSqrt|x|Restoring}, we present reproducing of
the symmetric monomials $x_*^\alpha$, $\alpha=\frac12,1,\frac32,2$, by
integer shifts of the symmetric B-splines $B_*^{\alpha}$,
see~\eqref{FractionalPolynomialsReproducingByFractionalSplinesSymmetricCase}.

\begin{figure}[t]
  \centerline{\includegraphics[width=0.8\textwidth]{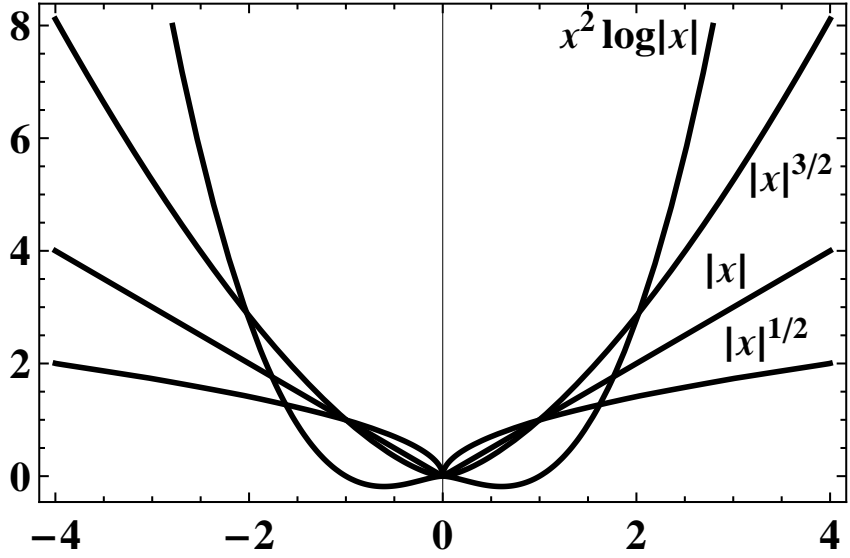}}
  \caption{Reproducing of symmetric monomials $x_*^\alpha$, $x\in\R$, $\alpha=\frac12,1,\frac32,2$, by
     the symmetric B-splines $B_*^{\alpha}$.}
   \label{FigureSymmetricMonomialSqrt|x|Restoring}
\end{figure}

\begin{remark}
In spite of the infinite support of a symmetric B-spline $B_*^{\alpha}$, $\alpha\not\in2\Z_{\ge0}+1$,
the series in the left-hand side of formula~\eqref{FractionalPolynomialsReproducingByFractionalSplinesSymmetricCase}
converges uniformly; and we can apply component-wise the fractional derivative $D_*$ of some order $<\alpha+1$
to the both sides of relation~\eqref{FractionalPolynomialsReproducingByFractionalSplinesSymmetricCase}.
Unlike reproducing of ordinary polynomials by the fractional B-splines,
formula~\eqref{FractionalPolynomialsReproducingByFractionalSplinesSymmetricCase}
can be considered as an analytical manipulation of the shifted B-splines
to obtain (fractional) monomials on the {\em whole} line $\R$ at once.
\end{remark}

In the symmetric case, the similar to~\eqref{InverseFiniteDifferenceCausalCase} formula
is valid also
$$
    x_{*}^\alpha=\Delta^{-\alpha-1}_* B^\alpha_*.
$$

\subsubsection{Causal and symmetric monomials of integral degree}

As has been noted in Subsection~\ref{OrdinarySplines},
the classical B-splines can be considered as particular cases of the causal and symmetric B-splines.
Thus formulas~\eqref{CausalMonomialsRestoring}
and~\eqref{FractionalPolynomialsReproducingByFractionalSplinesSymmetricCase}
can be applied to the classical B-splines, see
Figs.~\ref{FigureCausalMonomialSqrtxRestoring},~\ref{FigureSymmetricMonomialSqrt|x|Restoring}.

According to Remark~\ref{RemarkOnEvenWRTKBinomialCoefficiemts}, for any {\em even} degree $\alpha$ of
the reproduced monomial $x_*^\alpha$, $\alpha\in2\Z_{\ge0}$,
formula~\eqref{FractionalPolynomialsReproducingByFractionalSplinesSymmetricCase} cannot be applicable.
However this is possible to factorize a finite difference $\Delta_*^\alpha$ as follows
\begin{equation}\label{FiniteDifferenceFactorization}
  \Delta^\alpha_*=\Delta^{\alpha_1}_*\ast\Delta^{\alpha_2}_*\ast\cdots\ast
     \Delta^{\alpha_k}_*,\qquad
     \begin{aligned}
       &\alpha_1+\alpha_2+\cdots+\alpha_k=\alpha,\\
       &a_j\ne-2\N+1,\ j=1,\dots,k,
     \end{aligned}
\end{equation}
where
orders $\alpha_j$, $j=1,\dots k$,
are arbitrary (excluding negative odd values).

Using~\eqref{FiniteDifferenceFactorization}
and decomposition $3=\frac32+\frac32$, for example, the monomial $x_*^2:=x^2\log|x|$ can be reproduced as
\begin{align}
  x^2\log|x| = \Delta_*^{-3}B^2_*(x)
   =\left(\Delta^{-3/2}_*\ast\Delta^{-3/2}_*\right)&B^2_*(x)\nonumber\\
   &=\Delta^{-3/2}_*\left(\Delta^{-3/2}_*B_*^2(x)\right).
       \label{DoubleSeries}
\end{align}

Note that the series in~\eqref{DoubleSeries} converge very slowly and
this technique can be interesting from the methodical point of view only.

\subsection{Two-dimensional case}

In this subsection, we consider the simplest
two-dimensional cases of the causal and symmetric B-splines (fractional and integer) obtained by
tensor product of the one-dimensional B-splines. Namely,
\begin{equation*}
\begin{aligned}
  B^{\alpha_1,\alpha_2}_+(x,y)&:=B^{\alpha_1}_+(x)B^{\alpha_2}_+(y)&\\
     =\sum_{k_1,k_2\ge0}&(-1)^{k_1+k_2}\dbinom{\alpha_1+1}{k_1}
             \dbinom{\alpha_2+1}{k_2}(x-k_1)_+^{\alpha_1}(y-k_2)_+^{\alpha_2},\quad x\ge0,\ y\ge0;\\[0.5eX]
  B^{\alpha_1,\alpha_2}_*(x,y)&:=B^{\alpha_1}_*(x)B^{\alpha_2}_*(y)&\\
     =\sum\limits_{k_1,k_2\in\Z}&(-1)^{k_1+k_2}\dbinom{\alpha_1+1}{k_1+\frac{\alpha_1+1}{2}}
                \dbinom{\alpha_2+1}{k_2+\frac{\alpha_2+1}{2}}(x-k_1)_*^{\alpha_1}(y-k_2)_*^{\alpha_2},
                          \quad x,y\in\R.
\end{aligned}
\end{equation*}

Note that the support of any two-dimensional causal B-spline $B_+^{\alpha_1,\alpha_2}(x,y)$
(if $\alpha_1$, $\alpha_2$ are fractional) is the {\em first quadrant}
of the Cartesian plane $\R^2$
and the support of any symmetric B-spline $B_*^{\alpha_1,\alpha_2}(x,y)$ (if $\alpha_1$, $\alpha_2$ are
not odd integer) is the {\em whole plane} $\R^2$.

The Fourier transform of two-dimensional B-splines
obviously is
\begin{equation*}
\begin{aligned}
  &\hat B^{\alpha_1,\alpha_2}_+(\xi,\eta):=\hat B_+^{\alpha_1}(\xi)\hat B_+^{\alpha_2}(\eta)
     =\left(\frac{1-e^{-i\xi}}{i\xi}\right)^{\alpha_1+1}\left(\frac{1-e^{-i\eta}}{i\eta}\right)^{\alpha_2+1},\\
  &\hat B^{\alpha_1,\alpha_2}_*(\xi,\eta):=\hat B_*^{\alpha_1}(\xi)\hat B_*^{\alpha_2}(\eta)
     =\left|\frac{1-e^{-i\xi}}{\xi}\right|^{\alpha_1+1}\left|\frac{1-e^{-\eta}}{\eta}\right|^{\alpha_2+1}.
\end{aligned}
\end{equation*}

Having some B-spline $B^{\alpha_1,\alpha_2}_+$, or $B^{\alpha_1,\alpha_2}_*$, we can obtain a two-dimensional
monomial $x_+^{\alpha_1}y_+^{\alpha_2}$, or $x_*^{\alpha_1}y_*^{\alpha_2}$, respectively, as follows
\begin{align}
  &\sum_{k_1,k_2\ge0} \dbinom{k_1+\alpha_1}{k_1}\dbinom{k_2+\alpha_2}{k_2}B_+^{\alpha_1,\alpha_2}(x-k_1,y-k_2)
       =x_{+}^{\alpha_1}y_{+}^{\alpha_2};
       \label{2DFractionalPolynomialsReproducingByFractionalSplinesCausalCase}\\
  &\sum_{k_1,k_2\in\Z}(-1)^{k_1+k_2}\dbinom{-\alpha_1-1}{k_1-\frac{\alpha_1+1}{2}}
            \dbinom{-\alpha_2-1}{k_2-\frac{\alpha_2+1}{2}}B_*^{\alpha_1,\alpha_2}(x-k_1,y-k_2)
                                  =x^{\alpha_1}_*y^{\alpha_2}_*.
       \label{2DFractionalPolynomialsReproducingByFractionalSplinesSymmetricCase}
\end{align}

Similarly to the one-dimensional case, two-dimensional causal monomials are reproduced {\em exactly}
(on any bounded set of the first quadrant) by a
finite number of the bivariate causal B-splines.

Modifying a little summation in formula~\eqref{2DFractionalPolynomialsReproducingByFractionalSplinesSymmetricCase},
we can obtain that the support of the reproduced symmetric monomial $x^{\alpha_1}_*y^{\alpha_2}_*$ is
the first and third quadrants (or the second and fourth quadrants) of the plane $\R^2$. Namely, the indexes
of summation $k_1$, $k_2$ in the left-hand side of
formula~\eqref{2DFractionalPolynomialsReproducingByFractionalSplinesSymmetricCase}
must have the same (or opposite, in the second case) signs; i.\,e., $k_1\,k_2\ge0$ (or $k_1\,k_2\le0$, respectively).

In Fig.~\ref{Figures2DReproducing}, we present reproducing of 2D monomials (causal and symmetric)
by integer shifts of the corresponding 2D B-splines,
see~\eqref{2DFractionalPolynomialsReproducingByFractionalSplinesCausalCase},~\eqref{2DFractionalPolynomialsReproducingByFractionalSplinesSymmetricCase}.

\begin{figure}[h]
  \hbox to 0.8\textwidth {\large \hspace{0.18\textwidth} $\pmb{x^{1/4}_+y^{8/3}_+}$
    \hfil\hspace{0.38\textwidth} $\pmb{|x|^{1/2}|y|^{3/2}}$\hfil}

  \medskip
  \centerline{\includegraphics[width=0.45\textwidth]{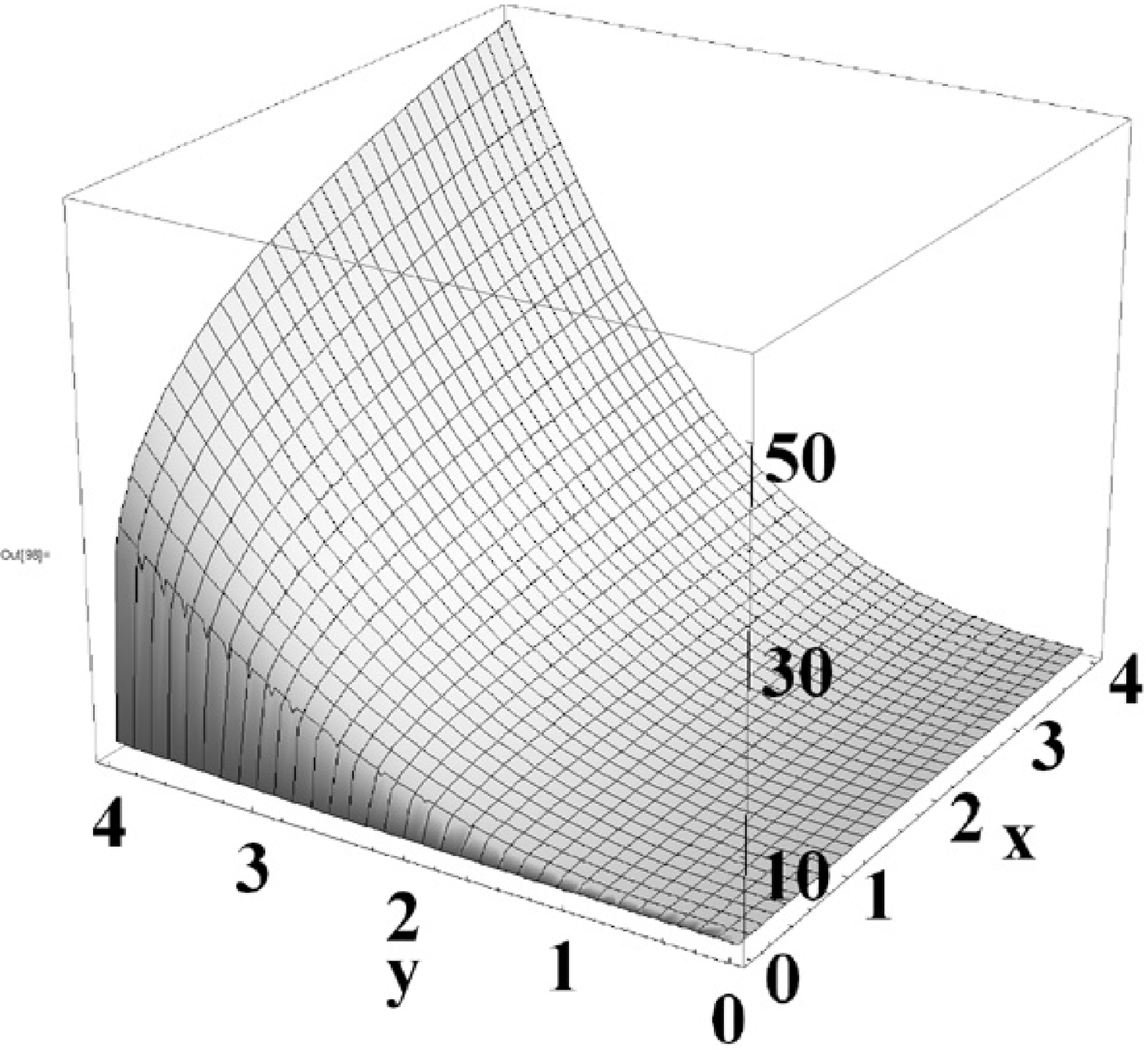}\hfil
  \includegraphics[width=0.45\textwidth]{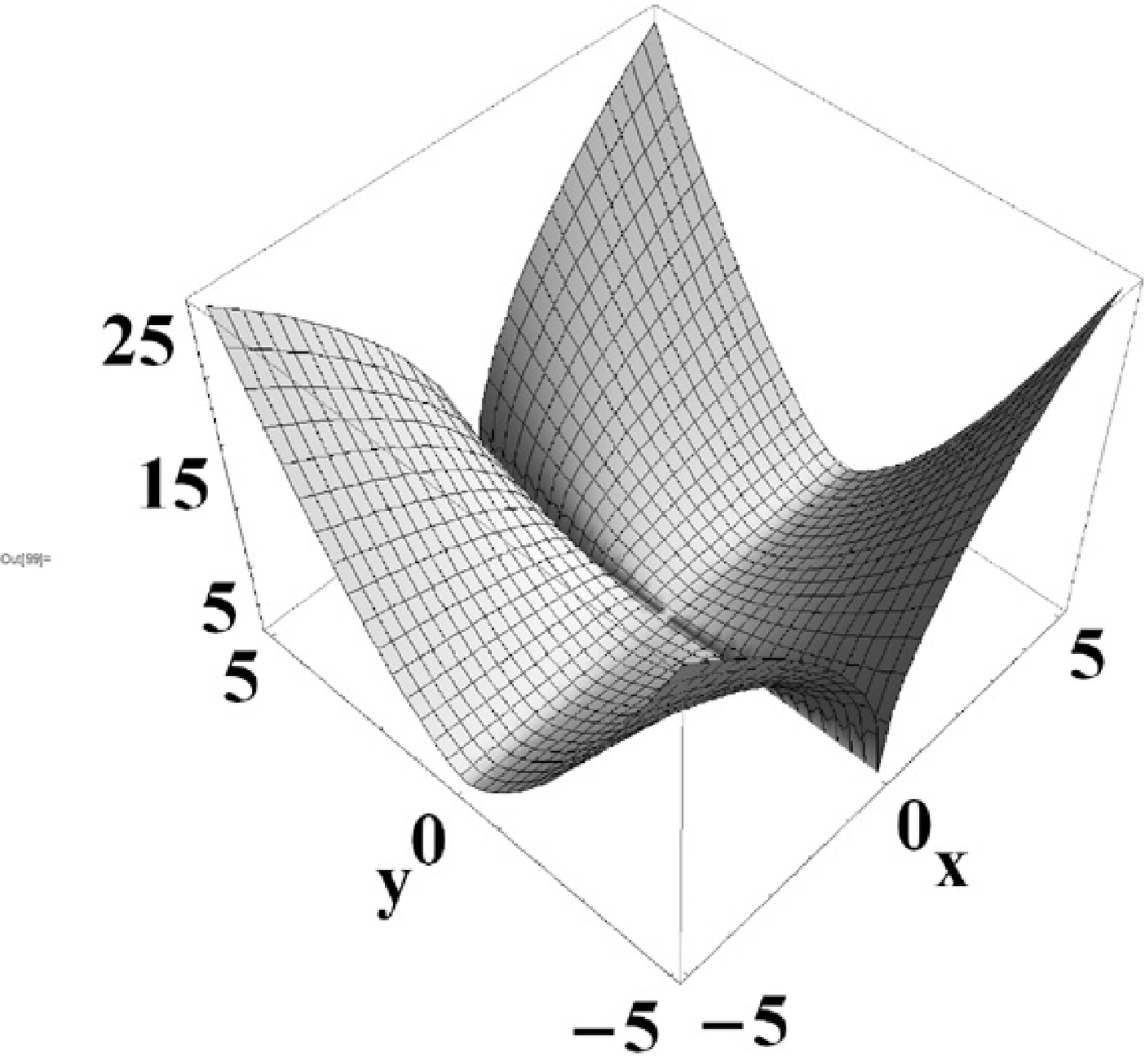}}
  \caption{2D causal monomial $x^{1/4}_+y^{8/3}_+$, $x,y\in\R^2$, $x,y\ge0$, reproduced
    by the 2D causal B-spline $B_+^{1/4,8/3}(x,y)$ (left);
    and 2D symmetric monomial $|x|^{1/2}|y|^{3/2}$, $x,y\in\R^2$, reproduced
    by the 2D symmetric B-spline $B_*^{1/2,3/2}(x,y)$ (right).}
   \label{Figures2DReproducing}
\end{figure}

\section{The Strang--Fix conditions}\label{Section_Strang--FixConditions}

In this section, we considered reproducing of the causal and symmetric monomials (integer and fractional)
with relation to the Strang--Fix conditions. The Strang--Fix conditions
are presented in the context of numerical sequences (filters) and MRA.
This section can be interpreted as an announcement and the subdivision approach to the Strang--Fix theory
will be the object of other papers, see paper~\cite{StrFix&Subdivision}.

\subsection{Classical Strang--Fix conditions}

\begin{theorem}\label{MyTheoremII}
Let $\pmb{a}=(a_k)_{k\in\Z}$ be a sequence
and
let $\pmb{b}=(b_k)_{k\in\Z}$ be another sequence defined as
\begin{equation}\label{BDefinition}
  \pmb{b}:=\DTFT^{-1}\hat{\pmb{a}}(\omega+\pi).
\end{equation}
Suppose $\hat{\pmb a}(0)=1$ and
\begin{equation}\label{DetCondition}
  \begin{vmatrix}
               \hat{\pmb a}(\omega) & \hat{\pmb a}(\omega+\pi) \\
               \hat{\pmb b}(\omega) & \hat{\pmb b}(\omega+\pi) \\
             \end{vmatrix} \ne 0, \qquad\omega\in[0,2\pi).
\end{equation}
Let $p(x)$, $x\in\R$, be a function and $\pmb p =
\bigl(p(k)\bigr)_{k\in\Z}$ be samples of the function $p$ (selected in the unit step of discretization).
Let $\phi$ be the scaling function corresponding to the
sequence $\pmb a$: $\hat\phi(\omega):=\prod_{j=1}^\infty \hat{\pmb a}(\omega/2^j)$.
Suppose
\begin{equation}\label{SequenceForm}
  \pmb b\ast{\pmb p}=0,
\end{equation}
then reproducing relations
\begin{equation*}
  \sum_{k\in\Z}p(k)\phi(x-k)=\sum_{k\in\Z}\phi(k)p(x-k)=p\ast\phi(x).
\end{equation*}
are valid.
\end{theorem}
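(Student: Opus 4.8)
The statement is a discrete-time/frequency-domain repackaging of the classical Strang--Fix theory, so the natural route is to pass everything through the Fourier transform and reduce the reproducing identity to a pointwise statement about $2\pi$-periodic functions. First I would fix notation: write $\hat{\pmb p}(\omega):=\DTFT[\pmb p](\omega)$ (interpreted as a tempered distribution on $[0,2\pi)$ since $\pmb p$ is only a sampled function, not summable), and recall that $\hat\phi(\omega)=\prod_{j\ge1}\hat{\pmb a}(\omega/2^j)$ satisfies the refinement identity $\hat\phi(\omega)=\hat{\pmb a}(\omega/2)\hat\phi(\omega/2)$, together with $\hat\phi(0)=1$ (from $\hat{\pmb a}(0)=1$). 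The claimed reproducing relation $\sum_k p(k)\phi(x-k)=p\ast\phi(x)$ is, after taking the CFT of both sides, the identity
\begin{equation*}
  \hat{\pmb p}(\omega)\,\hat\phi(\omega)=\widehat{p\ast\phi}(\omega)
\end{equation*}
in the sense of distributions; the right-hand side is $\hat p(\omega)\hat\phi(\omega)$ where $\hat p$ is the CFT of the continuous function $p$. So the crux is to show that $\hat{\pmb p}$ and $\hat p$ agree after multiplication by $\hat\phi$, i.e. that $\hat\phi$ annihilates the difference.

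**Key steps.** The second step is to exploit condition~\eqref{SequenceForm}, $\pmb b\ast\pmb p=0$, which by~\eqref{DTFTForDiscreteConvolution} reads $\hat{\pmb b}(\omega)\hat{\pmb p}(\omega)=0$, and since $\hat{\pmb b}(\omega)=\hat{\pmb a}(\omega+\pi)$ by~\eqref{BDefinition}, this is $\hat{\pmb a}(\omega+\pi)\hat{\pmb p}(\omega)=0$, equivalently $\hat{\pmb a}(\omega)\hat{\pmb p}(\omega+\pi)=0$. The nonvanishing determinant~\eqref{DetCondition} is exactly the statement that the modulation matrix $\begin{pmatrix}\hat{\pmb a}(\omega)&\hat{\pmb a}(\omega+\pi)\\ \hat{\pmb b}(\omega)&\hat{\pmb b}(\omega+\pi)\end{pmatrix}$ is invertible, which guarantees that $\hat{\pmb a}(\omega)$ and $\hat{\pmb a}(\omega+\pi)$ have no common zero; hence wherever $\hat{\pmb a}(\omega)=0$ we must have $\hat{\pmb a}(\omega+\pi)\neq0$, forcing (from the displayed identity) $\hat{\pmb p}(\omega+\pi)=0$ there, i.e. $\hat{\pmb p}$ vanishes at each point $\omega_0+\pi$ for which $\hat{\pmb a}(\omega_0)=0$. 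The third step is the telescoping argument standard in Strang--Fix proofs: the zeros of $\hat\phi(\omega)=\prod_j\hat{\pmb a}(\omega/2^j)$ on $\R\setminus\{0\}$ are precisely the points $2^j\omega_0$ (for $\hat{\pmb a}(\omega_0)=0$, $j\ge0$), in particular $\hat\phi$ vanishes at every nonzero point of $2\pi\Z$, and more generally the periodization $\sum_{m}\hat\phi(\omega+2\pi m)^{\text{(action)}}$ collapses so that $\hat{\pmb p}(\omega)\hat\phi(\omega)$ sees only the ``central'' branch of $\hat p$. Concretely, one shows that $\hat{\pmb p}(\omega)-\hat p(\omega)$ (the aliasing error from sampling $p$) is a combination of shifts $\hat p(\omega+2\pi m)$, $m\neq0$, each of which is killed on the support of $\hat\phi$ after the zero-at-$\pi$-type cancellations propagate through all dyadic scales — this is where condition~\eqref{SequenceForm} does its work.

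**Main obstacle.** The delicate point is not the algebra but the analytic hygiene: $\pmb p=\bigl(p(k)\bigr)_k$ is a sequence of samples with no decay assumption, so $\hat{\pmb p}$ lives only in $S'([0,2\pi))$, and the products $\hat{\pmb p}\hat\phi$, $\hat{\pmb a}(\omega+\pi)\hat{\pmb p}(\omega)$ must be interpreted distributionally — multiplication of a distribution by a smooth periodic function is fine, but one must be careful that ``vanishing at a point'' for a distribution means vanishing on a neighborhood (or as a measure), so the zero-propagation step needs $\hat{\pmb a}$ to vanish to the right order, which is implicit in~\eqref{DetCondition} only at the level of non-shared zeros. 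The cleanest remedy, and the one I expect the paper to use, is to first establish the identity for $p$ in a dense test class (e.g. exponentials or polynomials, for which both sides are classical and finite sums/well-defined), verify that $\pmb b\ast\pmb p=0$ is exactly the compatibility condition making the two sides equal there, and then invoke Remark~\ref{RemarkOnIDDFT} together with a density/continuity argument; alternatively one cites the classical Strang--Fix equivalence directly, since~\eqref{DetCondition} plus $\hat{\pmb a}(0)=1$ is the biorthogonal-filter hypothesis under which that equivalence is known. I would flag the convergence of $\sum_k p(k)\phi(x-k)$ (only pointwise/locally, given $\phi$ need not be compactly supported — cf. the discussion around~\eqref{MonomialReproducing}) as the one place where a blanket ``it works'' is unjustified and a hypothesis on decay of $\phi$ or growth of $p$ is tacitly needed.
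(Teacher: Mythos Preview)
The paper does not prove Theorem~\ref{MyTheoremII}. It is stated without proof, and the surrounding section is explicitly described as ``an announcement,'' with the subdivision approach to the Strang--Fix theory deferred to the reference~\cite{StrFix&Subdivision}. The only supporting material is the remark that follows, which asserts (without argument) that condition~\eqref{SequenceForm} is equivalent to the classical Strang--Fix conditions, together with the brief discussion after~\eqref{VanishingCondition} observing that, for compactly supported $\pmb a$, the relation $\hat{\pmb b}(\omega)\hat{\pmb p}(\omega)=0$ forces $\hat{\pmb p}$ to be a finite combination of $\delta$ and its derivatives, hence $p$ a polynomial. There is therefore no proof in the paper against which to compare your proposal.

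For what it is worth, your outline is the standard Fourier-side Strang--Fix argument and is consistent with what little the paper does say: your ``key step'' $\hat{\pmb b}\hat{\pmb p}=0$ is exactly~\eqref{VanishingCondition}, and your use of~\eqref{DetCondition} to rule out common zeros of $\hat{\pmb a}(\omega)$ and $\hat{\pmb a}(\omega+\pi)$ is the intended role of that hypothesis. Your closing caveat about analytic hygiene (distributional products, aliasing via Poisson summation, convergence of $\sum_k p(k)\phi(x-k)$) is well placed; the paper itself later concedes, in the remark following Theorem~\ref{MyTheoremIII}, that specifying the continuous function $p$ from the sequence $\pmb p$ is ``a nontrivial problem'' requiring ``many additional explanations,'' and does not resolve it here.
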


\begin{remark}
Here note some remarks on the previous theorem.
\begin{enumerate}
\item
In fact, conditions~\eqref{SequenceForm} are equivalent to the traditional Strang--Fix conditions
on the basis function $\phi$ (defined by $\pmb a$) and the reproduced function $p$.
Certainly, conditions~\eqref{SequenceForm} do not supply directly the order of the corresponding Strang--Fix
conditions. The order is defined by the sequence $\pmb b$, actually, by the corresponding sequence $\pmb a$
(scaling function $\phi$).
\item
Definition of the sequence $\pmb b$ by formula~\eqref{BDefinition} is not necessary and, consequently,
$\pmb b$ can be defined by a different way. In any case, the sequences $\pmb a$ and $\pmb b$ must
provide~\eqref{DetCondition}.
\item
Condition~\eqref{DetCondition} is valid iff the following sum that defines the corresponding MRA
\begin{equation*}
  V_1=V_0\oplus W_0\qquad\left(\mbox{\parbox{0.27\textwidth}{the direct sum is not\\ necessary orthogonal}}\right),
\end{equation*}
where $V_0:=\Span\set{\phi(\cdot-k)}{k\in\Z}$, $V_1:=\Span\set{\sqrt{2}\phi(2\cdot-k)}{k\in\Z}$,
$\psi:=\pmb b\cdot\left(\sqrt{2}\phi(2\cdot-k)\right)_{k\in\Z}$,
and $W_0:=\Span\set{\psi(\cdot-k)}{k\in\Z}$, holds.
\item
If we rescale the function $p$ as: $p^\sharp:=p(c\cdot)$, where $c$ is a rescaling factor;
then, for the function $p^\sharp$, conditions~\eqref{SequenceForm} are valid also. Thus
$p$ can be sampled in an {\em arbitrary} step.
\end{enumerate}
\end{remark}

Using the DDFT, rewrite conditions~\eqref{SequenceForm}  as:
\begin{equation}\label{VanishingCondition}
  \hat{\pmb{b}}(\omega)\hat{\pmb{p}}(\omega)=0, \qquad\omega\in[0,2\pi).
\end{equation}
Let the sequence $\pmb{a}$ (hence, the sequence \pmb{b} also) is compactly supported, then
$\hat{\pmb{b}}(\omega)$ can vanish on a set of zero measure only. Consequently, condition~\eqref{VanishingCondition}
is valid iff the function $\hat{\pmb{p}}(\omega)$ has a zero support. There is a fact that any
distribution that is concentrated at one point is a finite sum of the delta-distribution
and its derivatives.
Thus, if the sequence $\pmb{a}$ is compactly supported, the Strang--Fix conditions can take place only
for algebraic polynomials.
If the function $\hat{\pmb{b}}(\omega)$ has a zero of multiplicity $m\in\N$ at the point $x_0\in\R$, then
a polynomial of degree no more than $m-1$
satisfies condition~\eqref{VanishingCondition}.

\begin{remark}
If we continue analytically condition~\eqref{VanishingCondition} to the whole plane $\C$, see
Remark~\ref{AnalyticallContinuation}; and
if the analytical continuation of the function $\hat{\pmb b}(\omega)$, $\omega\in\C$, has a zero
of multiplicity $m\in\N$ at a point $x_0\in\C$. Then the functions $x^n e^{x_0 x}$, $n=0,1,\dots,m-1$,
satisfy condition~\eqref{VanishingCondition} (equivalently,~\eqref{SequenceForm}) and can be reproduced
by the corresponding function $\phi$.
In the paper, we shall not consider this generalization.
Here note only the papers~\cite{Zakh20,Zakh12}.

Moreover, note that this approach can be extended to fractional {\em exponential}
splines, see paper~\cite{Massopust}.
And the monomials (multiplied by exponents) can be represented by the corresponding B-splines.
This will be discussed elsewhere.
\end{remark}

\subsection{Weakening of the Strang--Fix conditions}\label{Subsec_WeakeningStrang--FixConditions}

The Fourier transform of a causal $x_+^\alpha$ or symmetric $x^\alpha_*$ monomial,
where $\alpha\in\R$
is not necessary fractional,
cannot comply with condition~\eqref{VanishingCondition} if the function $\hat{\pmb{b}}(\omega)$ vanishes only
at a point set. Thus we have to weaken the Strang--Fix conditions~\eqref{SequenceForm}
and formulate a weakened theorem.

\begin{theorem}\label{MyTheoremIII}
Under the conditions of Theorem~\ref{MyTheoremII}, weakening the Strang--Fix
conditions~\eqref{SequenceForm} as follows
\begin{equation}\label{SequenceFormWeakened}
  \pmb b\ast{\pmb p}=\delta_{n0};
\end{equation}
we have
\begin{equation*}
  \sum_{k\in\Z}p(k)\phi(x-k)=\sum_{k\in\Z}\phi(k)p(x-k)
    =p\ast\phi(x).
\end{equation*}
\end{theorem}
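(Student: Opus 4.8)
The plan is to pass to the Fourier picture and re-run the proof of Theorem~\ref{MyTheoremII} with~\eqref{SequenceFormWeakened} substituted for~\eqref{SequenceForm}; I expect only one line of that proof to need attention.

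First I would rewrite the hypothesis. By~\eqref{BDefinition} and~\eqref{DTFTForDiscreteConvolution}, condition~\eqref{SequenceFormWeakened} reads $\hat{\pmb a}(\omega+\pi)\,\hat{\pmb p}(\omega)=\DTFT[\delta_{n0}]=1$ for $\omega\in[0,2\pi)$ (equalities modulo a nonzero constant, per the conventions of the paper), i.e.\ $\hat{\pmb p}(\omega)=1/\hat{\pmb a}(\omega+\pi)$. This is precisely the situation realised by the sequences~\eqref{SequenceP} and~\eqref{SequencePSymmetricCase}, where $\hat{\pmb b}(\omega)\hat{\pmb p}(\omega)$ is the constant $2^{-\alpha-1}$, cf.~\eqref{MaskB},~\eqref{ZTransformP}. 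Writing $F(x):=\sum_{k}p(k)\phi(x-k)$ and using $\hat\phi(2\omega)=\hat{\pmb a}(\omega)\hat\phi(\omega)$ (from the product formula $\hat\phi(\omega)=\prod_{j\ge1}\hat{\pmb a}(\omega/2^{j})$), one gets in $S'$ that $\hat F(\omega)=\hat{\pmb p}(\omega)\hat\phi(\omega)=\hat\phi(\omega)/\hat{\pmb a}(\omega+\pi)$, and the same cancellation as in the proof of Theorem~\ref{Theorem_CasualFractionalMonomialReproducing} (the periodic denominator $\hat{\pmb a}(\omega+\pi)$ is killed by the numerator of $\hat\phi=\hat B^{\alpha}$, cf.~\eqref{FTFractionalB-Sline},~\eqref{FTSymmetricB-Sline}) turns this into $\hat p(\omega)\hat\phi(\omega)$, i.e.\ $F=p\ast\phi$. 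The remaining equality $\sum_{k}p(k)\phi(x-k)=\sum_{k}\phi(k)p(x-k)$ is the symmetry between sampling $p$ and sampling $\phi$ --- the ``discrete moments equal continuous moments'' identity driven by the Strang--Fix zeros of $\hat\phi$ at $2\pi(\Z\setminus\{0\})$ --- and it uses neither~\eqref{SequenceForm} nor~\eqref{SequenceFormWeakened}, so it transfers verbatim from Theorem~\ref{MyTheoremII}.

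The single place where~\eqref{SequenceFormWeakened} replaces~\eqref{SequenceForm} is harmless. In the proof of Theorem~\ref{MyTheoremII} the hypothesis $\pmb b\ast\pmb p=0$ is invoked exactly once, to drop a term proportional to $\hat{\pmb b}(\omega)\hat{\pmb p}(\omega)=\hat{\pmb a}(\omega+\pi)\hat{\pmb p}(\omega)$; under~\eqref{SequenceFormWeakened} that term is instead a constant multiple of $\hat{\pmb a}(\,\cdot+\pi)$, which after the dyadic cascade contributes $\sum_{j\ge0}(\text{const})\,\hat{\pmb a}(2^{j}\omega+\pi)\hat\phi(2^{j}\omega)$. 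Each summand still vanishes because $\hat{\pmb a}(\,\cdot+\pi)$ carries a (fractional) zero of order $\alpha+1$ at $0$ --- which is exactly the mechanism behind~\eqref{CausalMonomialsRestoring} and~\eqref{FractionalPolynomialsReproducingByFractionalSplinesSymmetricCase} --- so the conclusion of Theorem~\ref{MyTheoremII} is unchanged and Theorem~\ref{MyTheoremIII} follows.

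The hard part is analytic rather than algebraic. Since $\phi=B^{\alpha}$ has unbounded support, $\pmb p$ grows like $k^{\alpha}$, and $\hat{\pmb p}$ is a genuine distribution, every interchange used above --- summation with the continuous and discrete Fourier transforms, the aliasing/Poisson formulas, the cascade sum --- must be justified in $S'$ and only then read back as a pointwise statement on $\R$, using the convergence recorded in Section~\ref{Section_ReproducingMonomials} (uniform for symmetric splines, merely pointwise for causal ones). This is why I would establish the Fourier-transform identity first and deduce the reproducing relation from it afterwards, and it is presumably why the authors treat this section as an announcement.
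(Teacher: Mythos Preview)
The paper does not actually prove Theorem~\ref{MyTheoremIII}. Section~\ref{Section_Strang--FixConditions} is expressly framed as ``an announcement'' with the subdivision-based proofs of Theorems~\ref{MyTheoremII} and~\ref{MyTheoremIII} deferred to~\cite{StrFix&Subdivision}; the paper only records the DDFT reformulation~\eqref{WeakenedStrangFixCondition}, checks that the sequences of Theorems~\ref{Theorem_CasualFractionalMonomialReproducing} and~\ref{Theorem_SymmetricFractionalMonomialReproducing} satisfy it, and then, in the Remark immediately following the theorem, admits that the continuous function $p$ is not specified and ``to define the function we need many additional explanations.'' So there is no in-paper proof to compare your proposal against, and your closing sentence already anticipated this.

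That said, your sketch contains a slip worth flagging. From $\hat F(\omega)=\hat{\pmb p}(\omega)\hat\phi(\omega)$ and the explicit forms~\eqref{ZTransformP},~\eqref{FTFractionalB-Sline} one gets
\[
\hat{\pmb p}(\omega)\hat\phi(\omega)=\bigl(1-e^{-i\omega}\bigr)^{-\alpha-1}\left(\frac{1-e^{-i\omega}}{i\omega}\right)^{\alpha+1}=(i\omega)^{-\alpha-1}=\hat p(\omega),
\]
not $\hat p(\omega)\hat\phi(\omega)$. In other words, the cancellation you invoke reproduces $p$ itself (exactly as in Theorems~\ref{Theorem_CasualFractionalMonomialReproducing} and~\ref{Theorem_SymmetricFractionalMonomialReproducing}), not the continuous convolution $p\ast\phi$; a direct computation of $\widehat{p\ast\phi}=(i\omega)^{-2\alpha-2}(1-e^{-i\omega})^{\alpha+1}$ shows these differ. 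This tension is not yours alone---it reflects the imprecision the paper itself acknowledges in the Remark after the theorem---but your argument does not establish the displayed identity $\sum_k p(k)\phi(x-k)=p\ast\phi(x)$ as written. Your second paragraph, which tries to control the extra constant term through a dyadic cascade, is likewise only heuristic: the vanishing you cite happens at $\omega=0$, not globally, so the sum $\sum_{j\ge0}\hat{\pmb a}(2^{j}\omega+\pi)\hat\phi(2^{j}\omega)$ is not shown to vanish. The honest summary is that your proposal captures the intended Fourier mechanism (and matches how the paper links the theorem to the concrete reproducing results), but a genuine proof---which the paper does not supply either---would have to pin down what $p$ is and what the conclusion actually asserts.
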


Applying the DDFT to the both sides of condition~\eqref{SequenceFormWeakened}, we have
\begin{equation}\label{WeakenedStrangFixCondition}
  \hat{\pmb b}(\omega)\hat{\pmb p}(\omega)=1,\quad\text{a.\,e.}
\end{equation}
And, using~\eqref{MaskB},~\eqref{ZTransformP},~\eqref{MaskBSymmetric},~\eqref{SequencePSymmetricCase},
we see that, in
Theorems~\ref{Theorem_CasualFractionalMonomialReproducing},~\ref{Theorem_SymmetricFractionalMonomialReproducing},
condition~\eqref{WeakenedStrangFixCondition} (equivalently,~\eqref{SequenceFormWeakened}) holds, i.\,e.,
Theorems~\ref{Theorem_CasualFractionalMonomialReproducing},~\ref{Theorem_SymmetricFractionalMonomialReproducing}
can be considered as particular cases
of Theorem~\ref{MyTheoremIII}.

\begin{remark}
In Theorem~\ref{MyTheoremIII}, we do not specify the function $p=p(x)$, $x\in\R$. This is a nontrivial
problem and to define the function we need many additional explanations. Note also that even in
Theorems~\ref{Theorem_CasualFractionalMonomialReproducing},~\ref{Theorem_SymmetricFractionalMonomialReproducing},
we do not consider this function, only sequence.
\end{remark}

Condition~\eqref{SequenceFormWeakened}, in the causal case, can be written in a matrix form
that provides with the following system of linear equations
\begin{equation}\label{MatrixFormOfStrFix}
  \begin{pmatrix}
    b_0 & 0 & 0 & 0 & 0 & \dots \\
    b_1 & b_0 & 0 & 0 & 0 & \dots \\
    b_2 & b_1 & b_0 & 0 & 0 & \dots \\
    b_3 & b_2 & b_1 & b_0 & 0 & \dots \\[-1.7mm]
    \hdotsfor[1.5]{6}
  \end{pmatrix}
  \begin{pmatrix}
    p_0 \\ p_1 \\ p_2 \\ p_3 \\ p_4 \\ \vdots
  \end{pmatrix}  =
  \begin{pmatrix}
    1 \\ 0 \\ 0 \\ 0 \\ \vdots
  \end{pmatrix}.
\end{equation}

Note that system of linear equations~\eqref{MatrixFormOfStrFix}
supplies convenient matrix form of the weakened Strang--Fix conditions.
Namely, we do not bother ourselves about convergence of the binomial or Taylor series.
Moreover, the matrix form provides an useful way to state and solve the multidimensional weakened Strang--Fix
conditions. Note also that, in the multidimensional
case, several nonzero members of convolution~\eqref{SequenceFormWeakened} (on the boundary
of a compactly supported sequence $\pmb{a}$, for example) are natural.

This will be the object of another paper.

\begin{remark}
This is a well-known fact, that the space of polynomials
up to some integral degree $m$
is invariant under an {\em arbitrary} shift; and the dimension of such space is $m+1$. So the ordinary Strang--Fix
conditions provide for reproducing, by {\em integer} shifts of basis functions (B-splines, in our case), of
{\em any} polynomial (of degree up to the Strang--Fix conditions order).

In the case of causal and symmetric
(fractional and integer) monomials,
we have one-dimensional spaces, i.\,e., the spaces of causal and symmetric monomials
are not shift invariant.
Note, since
the spaces of causal and symmetric
monomials are reproduced
by integer shifts of the corresponding B-splines; the spaces
are invariant under {\em integer} shifts.
\end{remark}

\end{document}